\definecolor{darkgreen}{cmyk}{1,0,1,.2}
\definecolor{m}{rgb}{1,0.1,1}
\definecolor{green}{cmyk}{1,0,1,0}
\definecolor{test}{rgb}{1,0,0}
\definecolor{cmyk}{cmyk}{0,1,1,0}
\newtheorem{Equation}{}[section]
\newtheorem{example}[Equation]{Example}
\newtheorem{theorem}[Equation]{Theorem}
\newtheorem{lemma}[Equation]{Lemma}
\newtheorem{corollary}[Equation]{Corollary}
\newtheorem{definition}[Equation]{Definition}
\newtheorem{remark}[Equation]{Remark}
\newtheorem{remarks}[Equation]{Remarks}
\def\ch{\operatorname{ch}}
\def\Dir{\operatorname{{\not{\hspace{-0.05cm}\pa}}}}
\def\I{\operatorname{I}}
\def\Hom{\operatorname{Hom}}
\def\oH{\operatorname{H}}
\def\tr{\operatorname{tr}}
\def\vol{\operatorname{vol}}
\def\C{\mathbb C}
\def\N{\mathbb N}
\def\R{\mathbb R}
\def\S{\mathbb S}
\def\Z{\mathbb Z}
\def\T{\mathbb T}
\def\H{\mathbb H}
\def\cA{{\mathcal A}}
\def\cG{{\mathcal G}}
\def\cG{{\mathcal G}}
\def\cH{{\mathcal H}}
\def\cR{{\mathcal R}}
\def\cS{{\mathcal S}}
\def\cT{{\mathcal T}}
\def\cU{{\mathcal U}}
\def\what{\widehat}
\def\dd{\displaystyle}
\def\pa{\partial}
\def\ep{\epsilon}
\begin{document}



\title[Enlargeability, foliations, and positive scalar curvature \today]
{Enlargeability,  foliations, and positive scalar curvature\\
\today}


\author{Moulay-Tahar Benameur}
\address{Institut Montpellierain Alexander Grothendieck, UMR 5149 du CNRS, Universit\'e de Montpellier}
\email{moulay.benameur@umontpellier.fr}

\author[J.  L.  Heitsch \today]{James L.  Heitsch}
\address{Mathematics, Statistics, and Computer Science, University of Illinois at Chicago} 
\email{heitsch@uic.edu}

\thanks{MSC (2010) 53C12, 57R30, 53C27, 32Q10. \\
Key words: enlargeability, positive scalar curvature,  foliations.}

\begin{abstract} We extend the deep and important results of Lichnerowicz, Connes, and Gromov-Lawson which relate geometry and characteristic numbers to the existence and non-existence of metrics of positive scalar curvature (PSC).  In particular, we show: that a spin foliation with Hausdorff homotopy groupoid of an enlargeable manifold admits no PSC metric; that any metric of PSC on such a foliation is bounded by a multiple of the reciprocal of the foliation K-area of the ambient manifold; and that Connes' vanishing theorem for characteristic numbers of PSC foliations extends to a vanishing theorem for Haefliger cohomology classes.  
\end{abstract} 

\maketitle

\tableofcontents

\section{Introduction}
In this paper, we extend the famous results of  Lichnerowicz, \cite{L}, Connes,  \cite{C86}, and Gromov and Lawson, \cite{GL1, GL2, GL3} on the relationship of geometry and characteristic numbers to the existence and non-existence  of metrics of positive scalar curvature (PSC). Let  $F$  be a spin foliation with Hausdorff homotopy groupoid on a compact manifold $M$.  The condition on the homotopy groupoid allows us to employ the index theory for foliations developed in \cite{H95,HL99}.  Our main result is that if $M$ is enlargeable, then $F$ does not admit a PSC metric.  
We also obtain a bound on any PSC metric on the foliation in terms of the foliation K-area of the ambient manifold, \cite{Gromov}, and we extend Connes' vanishing theorem for characteristic numbers of a PSC foliation  to a vanishing theorem for Haefliger cohomology classes. 

In \cite{S, L}, Schr\"{o}dinger and, independently, Lichnerowicz proved that for any bundle of spinors $\cS$ over a spin manifold $M$, the Atiyah-Singer operator $\Dir$ and the connection Laplacian $\nabla^*\nabla$ on $\cS$ are related by 
$$
\Dir^2 \,\, = \,\, \nabla^*\nabla \,\, + \,\, \frac{1}{4}\kappa,
$$
where $\kappa$ is the scalar curvature of $M$, that is $\kappa = -\sum_{i,j =1}^n \langle R_{e_i,e_j}(e_i),e_j \rangle$, where $e_1,...,e_n$ is any local orthonormal framing of the tangent bundle of $M$ and $R$ is the curvature operator on $M$.  This leads immediately to the following.
\begin{theorem}\label{Lich}{\cite{L}}
If $M$ is a compact spin manifold and $\what{A}(M) \neq 0$, then $M$ does not admit any metric of positive scalar curvature.
\end{theorem}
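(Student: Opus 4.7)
The plan is to combine the Lichnerowicz formula stated just before the theorem with the Atiyah--Singer index theorem for the spin Dirac operator. Assume for contradiction that $M$ admits a Riemannian metric $g$ with positive scalar curvature $\kappa$. Since $M$ is compact, $\kappa$ attains a positive minimum $\kappa_0 > 0$.

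First I would analyze the kernel of the Dirac operator $\Dir$ acting on sections of the spinor bundle $\cS$. For any smooth spinor $\psi \in \Gamma(\cS)$, pairing the Lichnerowicz identity with $\psi$ and integrating over $M$ gives
$$
\langle \Dir^2 \psi, \psi \rangle_{L^2} \;=\; \|\nabla \psi\|_{L^2}^2 \;+\; \frac{1}{4}\int_M \kappa\, |\psi|^2\, \dvol.
$$
If $\psi \in \ker \Dir$, the left-hand side vanishes, while the right-hand side is bounded below by $\tfrac{\kappa_0}{4}\|\psi\|_{L^2}^2 \geq 0$, forcing $\psi = 0$. Hence $\ker \Dir = 0$. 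Using the $\Z/2$-grading $\cS = \cS^+ \oplus \cS^-$ and the fact that $\Dir$ is odd with respect to this grading, both the chiral halves $\Dir^\pm : \Gamma(\cS^\pm) \to \Gamma(\cS^\mp)$ have trivial kernel, so in particular
$$
\ind(\Dir^+) \;=\; \dim \ker \Dir^+ \;-\; \dim \ker \Dir^- \;=\; 0.
$$

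Next I would invoke the Atiyah--Singer index theorem applied to the spin Dirac operator, which identifies this analytic index with the topological invariant $\what{A}(M)$, the $\what{A}$-genus of $M$ evaluated on the fundamental class. The conclusion $\ind(\Dir^+) = \what{A}(M) = 0$ then contradicts the hypothesis $\what{A}(M) \neq 0$, proving that no such metric $g$ can exist.

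The only genuinely nontrivial ingredient is the Atiyah--Singer index identification $\ind(\Dir^+) = \what{A}(M)$, which is taken as a black box; the analytic step of ruling out harmonic spinors under positive scalar curvature follows directly from the displayed Lichnerowicz formula by a one-line integration by parts and a compactness argument. Since the paper's subsequent goal is to extend precisely this scheme to spin foliations with Hausdorff homotopy groupoid, the plan above also serves as a template: one replaces the ordinary Dirac operator by the leafwise Dirac operator, the ordinary index by a foliated (or Connes--Skandalis) index, and the scalar curvature condition by leafwise positive scalar curvature.
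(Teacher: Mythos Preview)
Your argument is correct and is precisely the standard one the paper has in mind: it does not spell out a proof but simply states the Lichnerowicz formula and says ``This leads immediately to the following,'' the implied steps being exactly your integration-by-parts vanishing of $\ker\Dir$ together with the Atiyah--Singer identification $\ind(\Dir^+)=\what{A}(M)$. Your closing remark about this being the template for the foliated extension is also apt and matches how the paper proceeds.
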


This theorem and its generalizations have important and deep consequences.  Some of the most far reaching were obtained by Connes and  by Gromov and Lawson.  

In the seminal paper \cite{C86}, Connes  proved that for any transversely oriented foliated manifold, integration over the transverse fundamental class yields a well defined map from the K-theory of the canonically associated $C^*$ algebra (the completion of the smooth functions with compact support on the holonomy groupoid of the foliation, see \cite{C79}) to $\C$.  He derived many important consequences from this, including the following extension of Theorem \ref{Lich}.
\begin{theorem}{\cite{C86}}\label{CTh}
If $M$ is a compact oriented manifold with $\what{A}(M) \neq 0$, then no spin foliation of $M$ has a metric of positive scalar curvature. 
\end{theorem}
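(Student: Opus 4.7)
The plan is to argue by contradiction, combining the Lichnerowicz identity applied leafwise with Connes' pairing against the transverse fundamental class. Assume that the spin foliation $F$ admits a leafwise metric with strictly positive leafwise scalar curvature $\kappa_F$. First, I would form the leafwise Atiyah--Singer Dirac operator $\Dir_F$ on sections of the leafwise spinor bundle $\cS_F$. The identity recalled above, applied leaf by leaf, yields
$$
\Dir_F^{\,2} \; = \; \nabla^*\nabla \; + \; \tfrac{1}{4}\kappa_F.
$$
Compactness of $M$ forces a uniform positive lower bound $\kappa_F \geq c > 0$, so $\Dir_F^{\,2} \geq c/4$ uniformly along leaves. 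Hence $\Dir_F$ is leafwise invertible in $L^2$, and its analytic index class $\Ind(\Dir_F) \in K_0(C^*(M,F))$ vanishes.

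Second, I would invoke the main theorem of \cite{C86}: integration over the transverse fundamental class $[M/F]$ defines a cyclic cocycle on a smooth subalgebra of $C^*(M,F)$ that extends to a well-defined additive map
$$
\tau_{[M/F]} \colon K_0(C^*(M,F)) \longrightarrow \C.
$$
Third, and crucially, I would apply the higher index theorem for foliations to identify the pairing
$$
\bigl\langle \tau_{[M/F]}, \, \Ind(\Dir_F) \bigr\rangle \; = \; \lambda \cdot \what{A}(M)
$$
for a nonzero universal constant $\lambda$. Since $\what{A}(M) \neq 0$ by hypothesis, this pairing is nonzero, contradicting the vanishing from the first step and ruling out the existence of such a metric.

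The principal obstacle is the third step. The Lichnerowicz argument produces vanishing only along leaves, and the transverse direction enters exclusively through the cyclic cocycle associated to $[M/F]$. One must therefore show that the index pairing reassembles the \emph{full} $\what{A}$-genus of the ambient manifold $M$, and not merely the leafwise $\what{A}(TF)$. Concretely, once $\Ind(\Dir_F)$ is identified with the longitudinal topological index, the cocycle $\tau_{[M/F]}$ contributes $\what{A}(\nu F)$ against the leafwise symbol, and multiplicativity $\what{A}(TF)\cdot\what{A}(\nu F) = \what{A}(TM)$ recovers $\what{A}(M)$. Making this precise, rather than merely plausible, is the technical heart of \cite{C86}, and is where the hypothesis of Hausdorff homotopy groupoid (and the resulting index theory of \cite{H95,HL99}) will play its role in the foliated extensions pursued later in the paper.
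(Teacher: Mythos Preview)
Your outline is essentially Connes' original argument, and the paper does not reprove this statement in the main text---it is cited from \cite{C86}. One point in your third step is imprecise, however: the transverse fundamental cocycle $\tau_{[M/F]}$ paired against the index of the \emph{untwisted} leafwise Dirac operator produces only $\int_M \what{A}(TF)$, not $\what{A}(M)$; the cocycle does not by itself insert a factor of $\what{A}(\nu F)$. That factor enters by twisting $\Dir_F$ by holonomy-equivariant bundles associated to the normal bundle $\nu$. Such bundles pull back to leafwise-flat bundles on the groupoid, so the curvature term $\cR^E_F$ in the Lichnerowicz formula vanishes leafwise and the spectral gap---hence the vanishing of the index---persists after twisting. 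This is exactly Connes' Corollary~8.3, quoted here as Theorem~\ref{CTh2}: $\langle \what{A}(TF)\,\omega,[M]\rangle=0$ for every $\omega$ in the ring $\cR$ generated by Chern characters of holonomy-equivariant bundles; specializing to $\omega=\what{A}(\nu)$ gives $\what{A}(M)=0$.

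The paper's own derivation (Section~\ref{Finalnotes}, under the additional Hausdorff homotopy groupoid hypothesis) follows the same skeleton but replaces the $C^*$-algebraic receptacle with Haefliger cohomology: positive leafwise scalar curvature together with a leafwise-almost-flat twist forces $\int_F \what{A}(TF)\,\omega=0$ in $H^*_c(M/F)$ via \cite{H95,HL99}; one then pairs with the Haefliger current given by integration over a complete transversal and again takes $\omega=\what{A}(\nu)$. Connes' route is more general (no Hausdorff assumption needed), while the Haefliger route is the one that extends to the non-compact covers required in the enlargeable case treated in the body of the paper.
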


Note that  $\what{A}(M)$ need not be an integer, since $M$ is not assumed to be spin. 

\medskip 

Recently Zhang, \cite{Z2016}, has proven the following theorem, which shifts the spin assumption to $M$.  

\begin{theorem}{\cite{Z2016}}\label{ZTh}
If $M$ is a compact oriented spin manifold with $\what{A}(M) \neq 0$, then no foliation of $M$ has a metric of positive scalar curvature.
\end{theorem}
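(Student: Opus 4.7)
\noindent\emph{Proof proposal.} The strategy is to argue by contradiction, using an adiabatic deformation of the metric on $M$ together with a sub-Dirac operator adapted to the foliation. If such an operator can be shown to have Fredholm index $\what{A}(M)$ yet to be invertible whenever the leafwise scalar curvature is positive, the theorem follows.

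Suppose for contradiction that $F$ carries a leafwise metric $g^F$ of positive scalar curvature $\kappa^F$. Choose a complementary distribution $F^\perp \subset TM$ and a bundle metric $g^\perp$ on it, and form the family $g_\epsilon = g^F \oplus \epsilon^{-2} g^\perp$ on $M$. Since $M$ is spin, the Atiyah-Singer theorem gives $\ind \Dir_\epsilon = \what{A}(M) \neq 0$ for the total Dirac operator $\Dir_\epsilon$ acting on $\cS_M$, independently of $\epsilon$. The naive Lichnerowicz identity $\Dir_\epsilon^2 = \nabla_\epsilon^*\nabla_\epsilon + \kappa_{g_\epsilon}/4$ is not sufficient by itself, since $\kappa_{g_\epsilon}$ mixes in transverse curvature and O'Neill contributions that are not controlled by $\kappa^F$ alone.

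The central step is to replace $\Dir_\epsilon$ by a sub-Dirac operator $\Dir^F_\epsilon$ built from Clifford multiplication by vectors tangent to $F$ only. Locally one has $\cS_M = \cS_F \,\what\otimes\, \cS_{F^\perp}$; globally, although the two factors need not exist individually (because $F$ and $F^\perp$ are not assumed spin), one can still choose a connection on $\cS_M$ adapted to the orthogonal splitting of $TM$ and use only the leafwise Clifford action. A Bochner-Weitzenb\"ock computation along the leaves, combined with the O'Neill identities for $g_\epsilon$, should yield
\begin{equation*}
(\Dir^F_\epsilon)^2 \;=\; \nabla^*\nabla \;+\; \tfrac14 \kappa^F \;+\; \epsilon\, R^\perp \;+\; O(\epsilon^2),
\end{equation*}
where $R^\perp$ is a bounded self-adjoint endomorphism built from the mean curvature of the leaves and the transverse curvature. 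On the compact manifold $M$ the positivity of $\kappa^F$ then dominates the error for $\epsilon$ small, forcing $\Dir^F_\epsilon$ to be invertible and hence $\ind \Dir^F_\epsilon = 0$.

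It remains to identify $\ind \Dir^F_\epsilon$ with $\what{A}(M)$. I would do this by exhibiting a one-parameter homotopy of elliptic symbols on $T^*M$ between the principal symbol of $\Dir^F_\epsilon$ and that of the ordinary Dirac operator $\Dir_\epsilon$, appealing to homotopy invariance of the Atiyah-Singer index. The resulting equality $\what{A}(M) = 0$ contradicts the hypothesis. The main obstacle I anticipate is the construction of $\Dir^F_\epsilon$ and the verification of the displayed Lichnerowicz-type formula: because $F$ and $F^\perp$ may fail to be spin separately, one cannot exploit a genuine global tensor factorization of $\cS_M$, so the connection, Clifford module structure and Weitzenb\"ock identity must all be derived intrinsically on $\cS_M$, and the error terms must be shown to be uniformly controlled by $\kappa^F$ over all of $M$.
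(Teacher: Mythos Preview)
First, note that this paper does not prove Theorem~\ref{ZTh}; it is quoted from \cite{Z2016} as background, so there is no in-paper argument to compare against.

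Your overall architecture---adiabatic rescaling $g_\epsilon = g^F \oplus \epsilon^{-2} g^\perp$ and a Lichnerowicz-type identity whose zeroth-order term is dominated by $\kappa^F$ as $\epsilon \to 0$---is indeed the framework of Zhang's proof. But there is a concrete gap in your construction of $\Dir^F_\epsilon$. An operator on $\cS_M$ using \emph{only leafwise} Clifford multiplication has principal symbol $\sigma(\xi) = c(\xi^F)$, which vanishes whenever $\xi$ is purely transverse. Hence $\Dir^F_\epsilon$ is not elliptic on $M$, is not Fredholm, and neither the Atiyah--Singer formula nor your proposed homotopy of \emph{elliptic} symbols is available; the step ``exhibit a homotopy of elliptic symbols between $\sigma(\Dir^F_\epsilon)$ and $\sigma(\Dir_\epsilon)$'' therefore fails at the outset.

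Zhang's remedy is considerably more elaborate. Rather than discarding the transverse Clifford action, he twists by an auxiliary bundle built from $\Lambda^*(F^\perp)$ (so that transverse Clifford multiplication acts on the exterior-algebra factor and ellipticity is retained), and in the general case passes to the Connes fibration over $M$---a bundle of transverse Euclidean structures on which the leafwise and transverse geometries can be decoupled. Only on this enlarged stage does one obtain an operator that is simultaneously (i) elliptic with index a nonzero multiple of $\what{A}(M)$ and (ii) governed by a Weitzenb\"ock formula with $\tfrac{1}{4}\kappa^F$ as dominant term in the adiabatic limit. Your proposal correctly identifies the endgame; the missing piece is precisely the construction of such an operator, and that construction is the heart of \cite{Z2016}.
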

For the results of Lichnerowicz and Connes,  $\what{A}(M)$ is the usual Hirzebruch $\what{A}$ genus which occurs in dimensions $4k$.  The result  of Zhang also includes the Atiyah-Milnor-Singer $\Z_2$ $\what{A}$ invariant, which occurs in dimensions $8k+1$ and $8k+2$. See \cite{LM}, II.7. 

In \cite{GL1,GL2,GL3}, Gromov-Lawson introduced the notion of enlargeable manifolds, which includes all solvmanifolds, all manifolds which admit metrics of non-positive sectional curvature,  all sufficiently large 3-manifolds, as well as many families of $K(\pi,1)$-manifolds.  The category of  enlargeable manifolds is closed under products, connected sums (with anything), and changes of differential structure.

\medskip
Recall the following definitions from \cite{GL3}.

\begin{definition}\label{contract}
A  $C^1$ map $f:M \to M'$ between Riemannian manifolds is $\ep$ contracting if $||f_*(v)|| \leq \ep ||v||$ for all tangent vectors $v$ to $M$.
\end{definition}
 
Denote by $\S^n(1)$ the usual $n$ sphere of radius $1$.
\begin{definition}\label{enlarge}
A compact Riemannian $n$-manifold  is enlargeable if for every  $\ep >0$, there is a orientable Riemannian covering  which admits an $\ep$ contracting map onto $\S^n(1)$ which is constant near infinity and has non-zero degree.
\end{definition}

Gromov-Lawson proved several important non-existence theorems, including the following.

\begin{theorem}[\cite{GL3}]
An enlargeable spin manifold does not admit any metric of positive scalar curvature.
\end{theorem}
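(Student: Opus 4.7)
The plan is to argue by contradiction, combining the twisted Lichnerowicz formula with a Gromov--Lawson style relative index theorem applied on the enlarging cover. Suppose $M$ is a compact enlargeable spin $n$-manifold carrying a metric $g$ with $\kappa \geq \kappa_0 > 0$. Fix on $\S^n(1)$ a complex vector bundle $E_0$ (for instance, the pullback of a nontrivial generator of reduced $\oK$-theory, e.g.\ a Bott bundle in even dimension, or appropriate Clifford bundle in odd dimensions) with connection of bounded curvature $\Omega_0$, and set $c := \| \Omega_0\|_\infty$. For each $\ep>0$, enlargeability furnishes an orientable Riemannian cover $\pi_\ep : \wtit{M}_\ep \to M$ and an $\ep$-contracting map $f_\ep : \wtit{M}_\ep \to \S^n(1)$ of nonzero degree, constant near infinity. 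The spin structure lifts to $\wtit{M}_\ep$, and the pullback bundle $E := f_\ep^*E_0$ inherits a pulled-back connection whose curvature $\Omega_E$ satisfies $\|\Omega_E\| \leq \ep^2 c$; moreover $E$ is trivialized (with flat pulled-back connection) outside a compact set $K \subset \wtit{M}_\ep$.

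Next I would form the twisted Dirac operator $\Dir_E$ on the spinor bundle $\cS \otimes E \to \wtit{M}_\ep$ and apply the Lichnerowicz--Bochner formula in its twisted form,
\begin{equation*}
\Dir_E^2 \;=\; \nabla^*\nabla \;+\; \frac14 \pi_\ep^*\kappa \;+\; \maR_E ,
\end{equation*}
where the curvature endomorphism $\maR_E$ is pointwise bounded by a universal constant times $\|\Omega_E\|$. Since $\pi_\ep^*\kappa \geq \kappa_0$ uniformly on $\wtit{M}_\ep$ and $\|\maR_E\| = O(\ep^2)$, one may choose $\ep$ small enough that
\begin{equation*}
\Dir_E^2 \;\geq\; \nabla^*\nabla + \frac{\kappa_0}{8}\Id
\end{equation*}
everywhere. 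Simultaneously, outside $K$ the operator $\Dir_E$ agrees with the free twisted Dirac operator $\Dir_{E^{\rm triv}}$ on the trivialized bundle, and the same positivity estimate holds for $\Dir_{E^{\rm triv}}^2$ since there $\pi_\ep^*\kappa \geq \kappa_0$ still. Both operators are thus uniformly positive and, in particular, have trivial $L^2$-kernel.

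Now I would invoke the Gromov--Lawson relative index theorem for a pair of Dirac-type operators on a complete manifold that coincide outside a compact set: it gives a well-defined integer $\ind(\Dir_E,\Dir_{E^{\rm triv}})$, computed by the topological formula
\begin{equation*}
\ind(\Dir_E,\Dir_{E^{\rm triv}}) \;=\; \int_{\wtit{M}_\ep} \what{A}(\wtit{M}_\ep)\bigl(\ch(E)-\ch(E^{\rm triv})\bigr) \;=\; \deg(f_\ep)\cdot \int_{\S^n(1)} \what{A}(\S^n)\,\widetilde{\ch}(E_0),
\end{equation*}
which, by the choice of $E_0$ on $\S^n$, is nonzero. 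On the other hand, whenever both operators in a relative index pair have trivial kernel (and cokernel), the relative index vanishes. This contradiction rules out the assumed metric of positive scalar curvature.

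The main obstacle I expect is technical rather than conceptual: justifying the relative index formula on the noncompact cover $\wtit{M}_\ep$. One needs completeness of $\wtit{M}_\ep$ (immediate from compactness of $M$), the fact that $E$ is genuinely trivialized at infinity (this is where ``constant near infinity'' in Definition \ref{enlarge} is essential, not merely ``eventually constant''), and the uniform positivity of both twisted Dirac squares so that the Fredholm/relative index makes sense and equals the topological side. Once that is in place, the Chern-character computation reduces, via $f_\ep^*$ and the degree, to the single nonzero integral on $\S^n$, and the positive-scalar-curvature hypothesis kills the analytic side.
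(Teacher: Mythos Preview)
Your proposal is a correct sketch of the original Gromov--Lawson argument (essentially as in \cite{LM}, IV.6): twisted Lichnerowicz formula on the enlarging cover, uniform positivity killing the $L^2$-kernels, and the relative index theorem providing the topological nonvanishing that yields the contradiction. The only point I would tighten is the odd-dimensional case, which you wave at with ``appropriate Clifford bundle''; the cleanest route is to pass to $M\times \S^1$ (as this paper does for odd-dimensional leaves) rather than invent an odd-dimensional index.

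Note, however, that the paper does not give its own proof of this theorem --- it is simply quoted from \cite{GL3}. What the paper does prove is the foliated generalization (Theorem~\ref{main}), and there the strategy runs parallel to yours: Lemma~\ref{lemma1} produces the almost-flat bundle $\what{E}$ on the cover, Lemma~\ref{NSinvars} uses the leafwise Lichnerowicz formula to obtain a spectral gap, and then the foliation index theory of \cite{H95,HL99} forces $\int_{\what F}\what A(T\what F)\ch(\what E)=0$ in Haefliger cohomology. In the non-compact case (Section~\ref{ncpf}) the paper, exactly as you do, subtracts off the trivial bundle $\I^n$ to localize the Chern character to a compact set before pairing with the transversal current. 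Specialized to the one-leaf foliation $F=TM$, the paper's argument collapses to precisely your relative-index computation. So your approach and the paper's are the same idea, with the paper replacing the Gromov--Lawson relative index theorem by the Haefliger-cohomological index machinery needed for genuine foliations.
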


In this paper, we extend the Gromov-Lawson result as follows.
\begin{theorem}\label{main}
If $M$ is an enlargeable manifold, then no spin foliation of $M$ with Hausdorff homotopy groupoid  has a metric of positive scalar curvature.  
\end{theorem}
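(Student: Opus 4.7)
The plan is to adapt the Gromov--Lawson twisted-Dirac argument to the leafwise setting, using the foliation index theory of \cite{H95,HL99} that the Hausdorff homotopy groupoid hypothesis makes available. Assume for contradiction that $F$ carries a leafwise metric whose scalar curvature $\kappa_F$ is bounded below by some $\kappa_0 > 0$. Fix $\epsilon > 0$ to be chosen later, and use the enlargeability of $M$ to produce an orientable Riemannian cover $\pi:\wM \to M$ together with an $\epsilon$-contracting map $\phi:\wM \to \S^n(1)$ that is constant near infinity and has non-zero degree $d$. The spin foliation $F$ lifts to a spin foliation $\wF$ on $\wM$ with leafwise scalar curvature still bounded below by $\kappa_0$, and the Hausdorff homotopy groupoid property lifts to $\wF$ as well, so the index theory of \cite{H95,HL99} is available on the cover.

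Choose a complex vector bundle $E_0\to \S^n$ whose top Chern character is non-zero (e.g., built from the half-spinor bundles of $\S^n$), trivialized outside a small disk. Pulling back yields $E:=\phi^*E_0\to\wM$, which is trivialized outside a compact set and has curvature of norm $O(\epsilon^2)$. Twisting the leafwise Dirac operator of $\wF$ by $E$ produces $\Dir^E_{\wF}$, and the leafwise Lichnerowicz formula gives
$$
(\Dir^E_{\wF})^2 \,\,=\,\, \nabla^*\nabla \,\,+\,\, \tfrac{1}{4}\kappa_{\wF} \,\,+\,\, \mathcal{R}^E,
$$
with $\|\mathcal{R}^E\|\le C\epsilon^2$ for a constant depending only on $E_0$ and the geometry of the leaves. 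Choosing $\epsilon$ small enough that $C\epsilon^2 < \kappa_0/8$, the operator $(\Dir^E_{\wF})^2$ is uniformly leafwise positive, so the corresponding leafwise analytic index---formed using the compactly supported virtual bundle $[E]-[\underline{\C}^{\operatorname{rk}E}]$ on $\wM$---vanishes.

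On the other hand, a relative leafwise index theorem, extending \cite{H95,HL99} to compactly supported $K$-theory classes on the non-compact cover, should identify this analytic index, paired with a Connes-type transverse fundamental class in the spirit of \cite{C86}, with a characteristic number involving $\what{A}(T\wF)$ and $\ch(E_0)$. A direct topological calculation shows that this number is a non-zero multiple of $d$, contradicting the analytic vanishing and completing the argument. The main obstacle is the construction and verification of this relative leafwise index formula on the non-compact cover: one must make precise the leafwise index with compactly supported coefficients in the Hausdorff-homotopy-groupoid framework, verify that uniform leafwise positivity of $(\Dir^E_{\wF})^2$ forces its vanishing, and extract the non-zero topological quantity from the degree of $\phi$ via an excision/Chern-character argument that parallels the classical Gromov--Lawson computation, while accommodating the fact that $\wF$ has leaves of codimension $q > 0$ so that the pairing picks up the transverse rather than the full fundamental class of $\wM$.
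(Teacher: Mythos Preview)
Your outline is essentially the paper's proof: contradiction via enlargeability, pull back a bundle from $\S^n$, leafwise Lichnerowicz gives a spectral gap, so the foliation index of \cite{H95,HL99} vanishes, while the topological side is nonzero by the degree computation. The obstacle you flag---making the relative leafwise index theorem work on the non-compact cover and extracting the numerical invariant---is exactly what the paper handles in its Section~\ref{ncpf}: it extends the (reduced) Haefliger cohomology $H^*_c(\wM/\wF)$ to the non-compact cover using bounded geometry, runs \cite{H95,HL99} separately for $\what{E}$ and for the trivial bundle $\I^n$, subtracts to get a compactly supported Haefliger class $\int_{\wF}\what{A}(T\wF)\ch_n(\what{E})$, and then pairs with the closed Haefliger current given by integration over a complete transversal (rather than a Connes-type cyclic pairing) to recover $\int_{\wM}\ch_n(\what{E})\neq 0$.
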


In addition, the techniques used here lead immediately to the following results. 

We obtain a bound on how large the scalar curvature on a spin foliation with Hausdorff homotopy groupoid can be which is a multiple of a natural extension of Gromov's K-area of $M$.   See Theorem \ref{Karea}. 

We extend Connes' vanishing theorem for characteristic numbers associated to a foliation with PSC and holonomy equivariant bundles to the vanishing of {\em cohomology classes} associated to a spin foliation with PSC and Hausdorff homotopy groupoid  and bundles whose pull backs to the groupoid are leafwise almost flat.   See Theorem \ref{Connes1} and Corollary \ref{Connes2}.

\begin{example}  The most classical examples of  enlargeable manifolds are tori.  In \cite{Z2016},  Zhang gave an outline of the proof that the famous result of Schoen-Yau, \cite{SY}, and Gromov-Lawson, \cite{GL1}, that there does not exist a PSC metric on any torus, extends to the case of foliations.  An immediate corollary of our result is the proof of this provided the foliation is spin  and has Hausdorff homotopy groupoid.
\end{example}

\begin{example} \label{exennspin} 
Examples of enlargeable (non-spin) oriented manifolds with $\what{A}(M) = 0$ and spin foliations (so accessable by Theorem \ref{main}, but not by Theorem \ref{CTh} nor Theorem \ref{ZTh}) with Hausdorff homotopy groupoids are given by $M=  \T^k \times ( \T^6 \sharp (\S^2\times \C P^2))$, $k \neq 8\ell +3, 8\ell+4$.   See Section \ref{Finalnotes} for comments, details, and a more sophisticated family of examples.
\end{example}

\begin{remarks} In theorem \ref{main}:\\
\indent
$\bullet$ The condition on the homotopy groupoid can be changed to requiring that there is a covering of $M$ so that the induced foliation has Hausdorff holonomy groupoid. Note that a foliation has Hausdorff homotopy groupoid if and only if any covering foliation also does.  If its holonomy groupoid is Hausdorff, so too is the holonomy groupoid of any covering foliation,  but the converse is false. See \cite{CH97}.  

$\bullet$ 
Note that there are no dimension restrictions.  Note also that the homotopy and holonomy groupoids of any Riemannian foliation are Hausdorff.  In addition, there are many important non-Riemannian foliations which also have this property, e.g.\ those in \cite{LP76}, \cite{H78}, and \cite{KT}. 

$\bullet$
Just as in the Gromov-Lawson results, the spin assumption can be weakened to requiring that the foliation induced on some covering of $M$ has a spin structure.  

$\bullet$ The requirement that $f:M \to \S^n(1)$ be $\ep$ contracting can be loosened to only require that it be $\ep$ contracting on  two forms.  See \cite{LM}, Definition 6.6.

$\bullet$ The condition that $M$ be enlargeable can be replaced by the condition that $M$ has an almost flat K-theory class $[E]$ with  $\ch([E]) =\ch_0([E]) + \ch_n([E])$ and $\dd \int_{M} \ch_n([E]) \neq 0$.  If $\dim M = n$ is odd,  replace $M$ by $M \times \S^1$.

\end{remarks}

Our approach combines the techniques of Gromov-Lawson, \cite{GL1,GL2,GL3} with those of \cite{H95,HL99}.  In particular,  to prove Theorem \ref{main}, we need three things: (1) a way to construct a Hermitian bundle on some  covering of $M$ whose curvature is as small as we like and whose Chern character  is non-trivial only in dimension $n$;  (2) an index theory for elliptic operators defined along the leaves of a foliation which satisfies:  (2a) the index of the operator is the same as the ``graded dimension" of its kernel;  (2b) there is a formula for the index involving characteristic classes.  The first requirement is satisfied by assuming that $M$ is enlargeable.  The second and third are provided by the results in \cite{H95,HL99}, which are quickly reviewed in the next section.

The main point is that the results of \cite{H95,HL99} remain valid on coverings of compact manifolds provided the leafwise spectrum of the relevant leafwise Dirac operator $D$ is nice  and  $F$ has Hausdorff homotopy groupoid. 
 If the foliation has PSC, then the spectrum is very nice, in fact there is a gap about zero.   This implies that the Chern character of the index bundle  must be zero.   But this Chern character is the integral over the leaves of $F$ of the usual expression involving characteristic classes.    Then the assumption that $M$ is enlargeable quickly leads to a contradiction, hence the non-existence of PSC  metrics on $F$.

For an excellent exposition of the circle of ideas so briefly mentioned here see \cite{LM}.

\medskip
\noindent
{\em Acknowledgements.}  It is a pleasure to thank Fernando Alcalde Cuesta, Mikhael Gromov, Gilbert Hector, Steven Hurder, Paul Schweitzer, SJ, and Shing-Tung Yau for helpful information, and the referee for cogent comments which helped improve the presentation.
MB  wishes to thank the french National Research Agency for support via the project ANR-14-CE25-0012-01 (SINGSTAR). 

\section{Preliminaries}\label{results}

In this section we briefly recall the results of  \cite{H95,HL99} we will use.  In particular,  $M$ is a smooth compact oriented manifold of dimension $n$, and $F$ is an oriented foliation of $M$ of dimension $p$  with Hausdorff homotopy groupoid.   The tangent bundle of $F$ will be denoted $TF$, and we assume that $TF$ admits a spin structure.  We may assume that both $M$ and $F$ are even dimensional, for if $F$ is  and $M$ is not, we  replace them by $F$  and  $M \times \S^1$, where the new foliation $F$ is just the old $F$ on each $M \times \{x\}$ for $x \in S^1$.
If  $F$ is not, we replace it by $F \times \S^1$ and $M$ by $M \times \S^1$ or $M \times \S^1 \times \S^1 = M \times \T^2$, depending on whether $M$ is not or is even dimensional.

The homotopy groupoid ${\mathcal G}$ of $F$ consists of equivalence classes of paths $\gamma:[0,1]\to M$ such that the image of $\gamma$ is contained in a leaf of $F$.  Two such paths are equivalent if they are homotopic in their leaf with end points fixed.   If $\cG$ is Hausdorff, then the foliation induced by $F$ on any covering of $M$  also has Hausdorff homotopy groupoid,
\cite{CH97}.   This is essential for the results of \cite{H95,HL99} to hold.   

There are two natural maps $r,s:\cG \to M$, namely $r([\gamma]) = \gamma(1)$ and $s([\gamma]) = \gamma(0)$.  The fibers $s^{-1}(x)$ are the leaves of the foliation $F_s$ of $\cG$.  Note that $r:s^{-1}(x) \to M$ is the simply connected  covering of the leaf of $F$ through $x$. 

The (reduced) Haefliger cohomology of $F$, \cite{Haef}, is given as follows.   Let  ${\cU}$ be a finite good cover of $M$ by foliation charts as defined in \cite{HL90}.  For each $U_i \in {\cU}$, let $T_i\subset U_i$ be a transversal and set $T=\bigcup\,T_i$.  We may assume that the closures of the $T_i$ are disjoint.  Let $\cH$ be the holonomy pseudogroup
induced by $F$ on $T$.   Give $\cA^k_c(T)$, the space of k-forms on $T$ with compact support,  the usual $C^\infty$ topology, and denote the exterior derivative by $d_T:\cA^k_c(T)\to  \cA^{k+1}_c(T)$.   Denote by  $\cA^k_c(M/F)$ quotient of $\cA^k_c(T)$ by the closure of the vector subspace generated by elements of the form $\alpha-h^*\alpha$ where $h\in \cH$ and $\alpha\in\cA^k_c(T)$ has support contained in the range of $h$.  The exterior derivative $d_T$ induces a continuous differential $d_H:\cA^k_c(M/F)\to \cA^{k+1}_c(M/F)$.  Note that $\cA^k_c(M/F)$ and $d_H$ are independent of the choice of cover $\cU$.  The associated cohomology theory is denoted $H^*_c(M/F)$ and is called the Haefliger cohomology of $F$.   This definition will be extended to non-compact coverings of $M$ in Section \ref{ncpf}.

Denote by $\cA^{p+k}(M)$ the space of smooth $p+k$-forms on $M$.  As the bundle $TF$ is oriented, there is a continuous open surjective linear map, called integration over the leaves,
$$
\int_F :\cA^{p+k}(M)\longrightarrow \cA^k_c(M/F)
$$
which commutes with the exterior derivatives $d_{M}$ and $d_{H}$, so it induces the map 
$$ 
\int_F :\oH^{p+k}(M;\R) \to \oH^k_c(M/F).
$$
This map is given by choosing a partition of unity $\{\phi_i\}$ subordinate to the cover $\cU$, and setting 
$$
\int_F \omega \,\, = \,\, \sum_i \int_{U_i} \phi_i \omega,
$$
where $\dd \int_{U_i}$ is integration over the fibers of the projection $U_i \to T_i$.  
Note that each integration $\omega \to \dd \int_{U_i} \phi_i \omega$ is essentially integration over a compact fibration, so $\dd \int_F$ satisfies the Dominated Convergence Theorem, even if $M$ is not compact. 

\medskip
Suppose that $E$ is a Hermitian vector bundle over $M$ with Hermitian connection.  Then any associated  generalized Dirac operator defined along the leaves of $F$ may be lifted, using the projection $r:\cG \to M$,  to a generalized Dirac operator $\Dir_{E}$ along the leaves of the foliation $F_s$ of $\cG$ with coefficients in the pulled back bundle  $r^*(E)$.   In \cite{H95},  a Chern character for $\Dir_{E}$, denoted $\ch(\Dir_{E}^+) \in H^*_c(M/F)$, is constructed using the Bismut superconnection $B$  of \cite{H95} for the foliation $F_s$.   Consider the Haefliger form $\dd \int_F tr_s(K(x,x))$ where $K$ is the Schwartz kernel of $e^{-B^2}$ (on the leaves of $F_s$) and $tr_s$ is the usual supertrace.  Here the variable $x \in M$ is taken to be the element in $\cG$ given by the constant path at $x$.

The main results of \cite{H95} are:   $\dd \int_F tr_s(K(x,x))$ is a closed Haefliger form and its class $\ch(\Dir_{E}^+)$ is independent of the metric on $M$;   if  the metric on $M$ is rescaled by $1/t$ with associated Schwartz kernel  $K_t(x,x)$,  then  $\lim_{t \to 0} tr_s(K_t(x,x))  =   \what{A}(TF)\ch(E)$,  and
$$
\ch(\Dir_{E}^+) \,\, = \,\,  \big[ \int_F \what{A}(TF)\ch(E)\big]  \quad \text{in \, $H^*_c(M/F)$.}
$$
In \cite{H95} it was assumed for simplicity that $M$ was compact.  However, the only condition needed is that $(M,F)$ is of bounded geometry, which is also satisfied by all coverings of $(M,F)$.

Denote by $P_0^{E}$ the graded projection onto the leafwise kernel of  $\Dir_{E}^2$, which also determines a Chern character $\ch (P_0^{E})$ in $H^*_c(M/F)$.  One might hope that, as for compact manifolds, $\ch(\Dir_{E}^+) =  \ch(P_0^{E})$.  However, this is not true in general.  See \cite{BHW}.  The main result of \cite{HL99} is that if the spectrum of  $\Dir_{E}^2$ is sufficiently well behaved near $0$ and $(M,F)$ is of bounded geometry, then $\lim_{t \to \infty} tr_s(K_t(x,x))$ exists, and  $\tr_s(K_t(x,x))$ satisfies the Dominated Convergence Theorem for $\dd \int_F$, as $t \to \infty$.  In particular,
$\dd \ch(P_0^{E}) = \int_F \lim_{t \to \infty} tr_s(K_t(x,x))$, so 
$$
 \int_F \what{A}(TF)\ch(E)   \,\, = \,\, \ch(\Dir_{E}^+)   \,\, = \,\,   \ch(P_0^{E}) \quad \text{in \, $H^*_c(M/F)$}.
$$
A special case is when there is a gap about $0$ in the spectrum of $\Dir_{E}^2$, in which case 
 $\lim_{t \to \infty} tr_s(K_t(x,x))  =   0$, so 
$$
\int_F \what{A}(TF)\ch(E)   \,\, = \,\,  \ch(P_0^{E}) \,\, = \,\, 0 \quad \text{in \, $H^*_c(M/F)$.}
$$

\section{Proof of Theorem \ref{main}: the compactly enlargeable case }\label{cecase}

Recall  definitions \ref{contract} and \ref{enlarge} from the introduction.  A manifold is compactly enlargeable if for each $\ep >0$, the covering space can be chosen to be compact.
For simplicity, we do the compactly enlargeable case first, as the general case requires several technical adjustments for its proof.  

Denote by $\cS$ the spin bundle associated to the spin structure on $TF$.   Then  $\cS_E = \cS \otimes E$ is a 
Dirac bundle when restricted to each leaf $L$ of $F$.  Define the canonical section $\cR^E_F$ of $\Hom(\cS_E,\cS_E)$ by the formula
$$
\cR^E_F(\sigma \otimes \phi) \,\, = \,\, \frac{1}{2} \sum_{j,k=1}^p (e_j \cdot e_k \cdot \sigma) \otimes R^E_{e_j,e_k} (\phi), 
$$
where $e_1,...,e_p$ is an orthonormal local framing of the tangent bundle of $L$, $R^E$ is the curvature transformation of $E$, and the dot is Clifford multipliation.

The following is immediate.  The first two  follow directly from the enlargeability property.  The third is proven in the proof of Corollary 5.6, p.\ 307 of   \cite{LM}, IV.5.

\begin{lemma}\label{lemma1}
Suppose that $F$ is a spin foliation of a  compactly enlargeable Riemannian $n$-dimensional manifold $M$, $n$ even.  Then, for each $\ep > 0$, there is a compact  
orientable Riemannian covering $\what{M} \to M$  and a Hermitian bundle $\what{E} \to \what{M}$ so that,
\begin{itemize}
\item  $\ch(\what{E}) = \dim \what{E} + \ch_{n/2}(\what{E})$;
\item  $\dd \int_{\what{M}} \ch_{n/2}(\what{E}) \neq 0$;
\item $||\mathcal{R}^{\what{E}}_{\what{F}}|| \leq \ep$.
\end{itemize}
where  $\what{F}$ is the foliation induced on $\what{M}$ by $F$,  and $\mathcal{R}^{\what{E}}_{\what{F}}$ is the leafwise operator given above on $\what{M}$.
\end{lemma}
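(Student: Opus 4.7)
The plan is the classical Gromov--Lawson pullback construction for almost flat bundles. First I would fix, once and for all, a Hermitian bundle with connection $(E_0,\nabla_0)$ on $\S^n(1)$ whose Chern character has the form $\ch(E_0) = \dim E_0 + \ch_n(E_0)$ and for which $\int_{\S^n(1)} \ch_n(E_0) \neq 0$. For even $n$ this is a standard generator of $\widetilde{\oK}(\S^n)$ equipped with any smooth Hermitian connection. For odd $n$ one first replaces $M$ by $M\times\S^1$, which is still compactly enlargeable (enlargeability being closed under products and $\S^1$ being enlargeable via its universal cover $\R$), reducing to the even case. Let $C_0 := \sup_{\S^n(1)} \|R^{E_0}\|$ denote the supremum norm of the curvature of $\nabla_0$; this is a fixed constant depending only on $(E_0,\nabla_0)$.

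Next, given $\ep > 0$, I would choose $\delta > 0$ small enough that $\tfrac12 p(p-1)\, \delta^2 C_0 \leq \ep$, where $p = \dim F$, and then invoke the compact enlargeability hypothesis on $M$ with parameter $\delta$. This yields a compact orientable Riemannian covering $\what{M} \to M$ together with a $\delta$-contracting map $f: \what{M} \to \S^n(1)$ of nonzero degree. I define $\what{E} := f^*E_0$, equipped with the pulled-back Hermitian connection, and let $\what{F}$ denote the foliation of $\what{M}$ induced by $F$.

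The three claimed properties then drop out of the naturality of Chern--Weil theory together with the contracting property of $f$. First, naturality gives $\ch(\what{E}) = f^*\ch(E_0) = \dim \what{E} + f^*\ch_n(E_0)$, which is supported in degrees $0$ and $n$ as required. Second, the change of variables formula and $\deg(f)\neq 0$ yield
$$
\int_{\what{M}} \ch_n(\what{E}) \,\,=\,\, \deg(f) \int_{\S^n(1)} \ch_n(E_0) \,\,\neq\,\, 0.
$$
Third, since the curvature 2-form pulls back by $f$ and $f$ is $\delta$-contracting, one has the pointwise estimate $\|R^{\what{E}}(v,w)\| \leq \delta^2 \|R^{E_0}(f_*v, f_*w)\| \leq \delta^2 C_0\, \|v\|\|w\|$. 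Inserting this into the defining formula of $\mathcal{R}^{\what{E}}_{\what{F}}$ and using that Clifford multiplication by unit vectors acts with bounded operator norm on $\cS$ gives $\|\mathcal{R}^{\what{E}}_{\what{F}}\| \leq \tfrac12 p(p-1)\, \delta^2 C_0 \leq \ep$ by the choice of $\delta$.

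There is no deep obstacle here, only careful bookkeeping: the main subtlety is the distinction between the $\ep$ appearing in the conclusion (a curvature bound) and the contraction parameter supplied by the definition of enlargeability, which must be taken strictly smaller and depending on the fixed data $(E_0, \nabla_0)$ and the leaf dimension $p$. The one genuinely non-tautological ingredient is the existence of the reference bundle $E_0$ on $\S^n$ with top Chern character a generator, which is entirely independent of $M$ and $F$ and is standard once the parity of $n$ is arranged.
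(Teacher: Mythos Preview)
Your proposal is correct and is exactly the argument the paper defers to (it simply cites \cite{LM}, IV.5, without further detail): pulling back a generator of $\widetilde{\oK}(\S^n)$ along the $\delta$-contracting map is the standard Gromov--Lawson construction, and your bookkeeping for the three bullet points is the expected one. One cosmetic slip: in your curvature estimate the first inequality should be the equality $\|R^{\what E}(v,w)\| = \|R^{E_0}(f_*v,f_*w)\|$, with the factor $\delta^2$ entering only at the next step via $\|f_*v\|\leq \delta\|v\|$; your final bound $\|R^{\what E}(v,w)\|\leq \delta^2 C_0\|v\|\|w\|$ is nonetheless correct.
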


The following lemma allows us to utilize the results of \cite{HL99}.

\begin{lemma}\label{NSinvars}
Suppose that  $F$ is spin foliation of a compactly enlargeable manifold $M$ which admits a metric whose restriction to the leaves of $F$ has PSC.  Let $(\what{M}, \what{F})$ and $\what{E}$ be as in Lemma \ref{lemma1}, and denote the homotopy groupoid of $\what{F}$ by $\what{\cG}$. Then there is a gap about $0$ in the spectrum of the leafwise operator $\Dir_{\what{E}}^2$ on $\what{\cG}$.  
\end{lemma}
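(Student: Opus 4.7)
The plan is to prove this by the standard Lichnerowicz--Bochner--Weitzenb\"ock argument, transplanted to the holonomy groupoid covering $\what{\cG}$. On each leaf of $\what{F}_s$ the lifted twisted Dirac operator $\Dir_{\what{E}_r}$ is an ordinary Dirac operator on a (simply connected) spin Riemannian manifold, and the standard identity
$$
\Dir_{\what{E}_r}^2 \,\, = \,\, \nabla^*\nabla \,\, + \,\, \tfrac{1}{4}\kappa_r \,\, + \,\, \cR^{\what{E}_r}_{\what{F}_s}
$$
holds leafwise, where $\kappa_r$ is the leafwise scalar curvature of $\what{F}_s$ and $\cR^{\what{E}_r}_{\what{F}_s}$ is the endomorphism from the introduction built from the lifted bundle curvature. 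Showing that the operator on the right is bounded below by a positive constant, uniformly across all leaves, will force a spectral gap about $0$.

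First I would control the scalar curvature. Since $M$ is compact and the metric on $M$ has (leafwise) positive scalar curvature, there is a uniform constant $\kappa_0 > 0$ with $\kappa \geq \kappa_0$ everywhere. The covering $\what{M} \to M$ is a local isometry, so the induced metric on $\what{M}$ still has leafwise scalar curvature bounded below by $\kappa_0$; and since $r\colon s^{-1}(x) \to \what{M}$ is also a local isometry of leaves (indeed, a covering of leaves), the pulled-back leafwise metric on $\what{\cG}$ has $\kappa_r \geq \kappa_0$ uniformly.

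Next I would control the twisting term. By construction $\what{E}_r = r^*\what{E}$ carries the pulled-back Hermitian metric and connection, so its curvature is the pullback of the curvature of $\what{E}$; consequently the pointwise operator norm of $\cR^{\what{E}_r}_{\what{F}_s}$ equals that of $\cR^{\what{E}}_{\what{F}}$ at the image point. Choose $\ep < \kappa_0/8$ in the application of Lemma \ref{lemma1}, so that $\|\cR^{\what{E}}_{\what{F}}\| \leq \ep$ and hence $\|\cR^{\what{E}_r}_{\what{F}_s}\| \leq \ep$ everywhere on $\what{\cG}$.

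Combining the two bounds with the Weitzenb\"ock identity yields, for any smooth compactly supported leafwise section $\phi$,
$$
\langle \Dir_{\what{E}_r}^2 \phi , \phi \rangle \,\, \geq \,\, \|\nabla \phi\|^2 \,\, + \,\, \bigl( \tfrac{\kappa_0}{4} - \ep \bigr) \|\phi\|^2 \,\, \geq \,\, \tfrac{\kappa_0}{8} \|\phi\|^2 ,
$$
which is the desired operator inequality leaf by leaf. Since the leafwise spectrum of $\Dir_{\what{E}_r}^2$ is the closure of the union of the $L^2$ spectra along the leaves, this forces $\Spec(\Dir_{\what{E}_r}^2) \subset [\kappa_0/8, \infty)$, giving a gap about $0$ of width at least $\kappa_0/8$. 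The only real subtlety, and the one point to verify carefully, is that ``positive scalar curvature on the foliation'' on the compact base really does transplant to a \emph{uniform} lower bound on every leaf of $\what{F}_s$; but this is automatic from compactness of $M$ together with the fact that both $\what{M} \to M$ and $r\colon s^{-1}(x) \to \what{M}$ are local isometries in the leaf directions.
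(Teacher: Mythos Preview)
Your argument is correct and is essentially the same as the paper's: both use the leafwise Lichnerowicz formula $\Dir_{\what{E}_r}^2 = \nabla^*\nabla + \tfrac14\kappa + \cR^{\what{E}_r}_{\what{F}_s}$ together with the uniform lower bound on $\kappa$ (from compactness of $M$) and the choice of $\ep$ small via Lemma~\ref{lemma1} to force $\Dir_{\what{E}_r}^2 \geq c > 0$. The only cosmetic difference is that the paper phrases this as a contradiction (taking $\phi$ in the spectral projection for $[0,\delta]$ and deriving $\delta \geq c$), while you argue the operator inequality directly and are more explicit about why the bounds are uniform on $\what{\cG}$.
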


\begin{proof}
Suppose not and let $\phi$ be in the image of the spectral projection of $\Dir_{\what{E}}^2$ for the interval $[0, \delta]$, with $L^2$ norm  $||\phi|| = 1$.
We may assume that $\delta$ is as small as we please.    Denote the leafwise scalar curvature  for $\what{F}_s$  by $\kappa$ and the pulled back bundle $r^*(\what{E})$ by $\what{E}_r$.  
Now the leafwise operator $\Dir_{\what{E}}^2$ on $\what{\cG}$ satisfies the pointwise equality, see \cite{LM}, II.8.17,
$$
\Dir_{\what{E}}^2 \,\, = \,\, \nabla^* \nabla \,\, + \,\,  \frac{1}{4}\kappa \,\, + \,\, \mathcal{R}^{\what{E}_r}_{\what{F}_s}.
$$
Choose $\ep$ so small that  $\frac{1}{4}\kappa \,\, - \,\,  ||\mathcal{R}^{\what{E}_r}_{\what{F}_s}|| >0$ on $\what{\cG}$.   Choose $c>0$ so that $c\I \leq \frac{1}{4}\kappa \,\, - \,\,  ||\mathcal{R}^{\what{E}_r}_{\what{F}_s}||$ on $\what{\cG}$. Then on each leaf  $s^{-1}(\what{x})$ of $\what{F}_s$, $\what{x} \in \what{M}$,  we have for all small positive $\delta$, 
$$
\delta \,\, \geq \,\, ||\Dir_{\what{E}}^2(\phi)||    \,\, = \,\,  ||\Dir_{\what{E}}^2(\phi)|| \,   ||\phi||   \,\, \geq \,\,  \langle \Dir_{\what{E}}^2(\phi), \phi \rangle  
\,\, = \,\, 
\int_{s^{-1}(\what{x})} \langle \nabla^*\nabla\phi , \phi \rangle \,\, + \,\, 
\int_{s^{-1}(\what{x})} \langle (\frac{1}{4}\kappa \,\, + \,\, \mathcal{R}^{\what{E}_r}_{\what{F}_s})\phi, \phi \rangle  \,\, = \,\, 
$$
$$
\int_{s^{-1}(\what{x})} ||\nabla\phi||^2 \,\, + \,\, 
\int_{s^{-1}(\what{x})} \langle (\frac{1}{4}\kappa \,\, + \,\, \mathcal{R}^{\what{E}_r}_{\what{F}_s})\phi, \phi \rangle 
\,\, \geq \,\, ( \int_{s^{-1}(\what{x})} ||\nabla\phi||^2) \,\, + \,\, c||\phi||^2 \,\, \geq \,\, c,
$$
an obvious contradiction.
\end{proof}

\begin{remark}
Note that the same proof shows that there is a spectral gap for $\Dir^2$ on $\what{\cG}$.  For this result, we do not need $M$ to be compactly enlargeable.  
\end{remark} 

\medskip
\noindent
{\bf  Proof of Theorem \ref{main}: the compactly enlargeable case.}  The proof is by contradiction.  So assume that  $M$  admits a metric whose restriction to the leaves of $F$ has PSC.  By Lemma \ref{NSinvars}, both $\Dir^2$ and $\Dir^2_{\what{E}}$  have spectral gaps about $0$.  By the results quoted in Section \ref{results}, we have 
$$
\dd  \int_{\what{F}} \what{A}(T\what{F}) \ch(\what{E})   \,\, = \,\, \ch( \Dir_{\what{E}}^+)   \,\, = \,\,  \ch(P^{\what{E}}_0)  \,\, = \,\,  0,
$$ 
and 
$$
\dd  \int_{\what{F}} \what{A}(T\what{F})   \,\, = \,\, \ch(\Dir^+)   \,\, = \,\,  \ch(P_0)  \,\, = \,\,  0,
$$ 
in the Haefliger cohomology of $\what{F}$.   For the closed Haefliger current $C$  which is integration over a complete transversal (Connes'  transverse fundamental class), we have 
$$
0 \,\, = \,\, \langle C, \int_{\what{F}} \what{A}(T\what{F}) \ch(\what{E}) \rangle \,\, = \,\,  (\dim \what{E}) \langle C, \int_{\what{F}} \what{A}(T\what{F})  \rangle \,\, + \,\,  \langle C, \int_{\what{F}}  \ch_{n/2}(\what{E}) \rangle.  
$$
As noted above,  $\dd  \int_{\what{F}} \what{A}(T\what{F}) = 0$, so the first term disappears, but the second term is
$\dd \int_{\what{M}} \ch_{n/2}(\what{E})$, which is non-zero, a contradiction. 

\section{Proof of Theorem \ref{main}: the general case} \label{ncpf}

For the general case, we assume that the Riemannian cover $\pi:\what{M} \to M$ is non-compact, and we adapt the proof of Gromov-Lawson as given in \cite{LM}, Section IV.6.  This involves extending the results of \cite{H95,HL99} to a non-compact covering space of $M$.  In \cite{LM}, they develop a relative index theory in order to deal with non-compact manifolds.  We are able to avoid having to do this by using Haefliger cohomology, which breaks up an integral over a non-compact manifold into a countable number of local integrations over compact fibrations.  Thus arguments which work for compact manifolds extend to non-compact manifolds, provided we have a good definition for Haefliger cohomology on non-compact manifolds.

For the paper \cite{H95}, the main change is in the definition of the Haefliger cohomology for the foliation $\what{F}$ of $\what{M}$.  The (reduced) Haefliger cohomology of $\what{F}$ is given as follows.  The good cover ${\cU}$ of $M$ determines a good cover $\what{\cU}$ of $\what{M}$  which consists of all open sets $\what{U}_{i,j}$ so that 
$\pi:\what{U}_{i,j} \to U_i$ is a diffeomorphism, where $U_i \in \cU$.   That is $\cup_j \what{U}_{i,j} = \pi^{-1}(U_i)$.  
The transversal $T_i  \subset U_i$ determines the transversal $\what{T}_{i,j}  \subset \what{U}_{i,j}$,  and the closures of the $\what{T}_{i,j}$ are disjoint.    The  space $\cA^k_c(\what{T})$ consists of all smooth k-forms on 
$\cup_{i,j} \what{T}_{i,j}$ which have compact support in each $\what{T}_{i,j}$, and such that they are uniformly bounded in the usual $C^\infty$ topology.   As above, we have the exterior derivative $d_{\what{T}}:\cA^k_c(\what{T})\to  \cA^{k+1}_c(\what{T})$, and the integration  $\dd \int_F:\cA^{p+k}_b(\what{M}) \to \cA^k_c(\what{T})$, where 
$\cA^{p+k}_b(\what{M})$ is the space of smooth uniformly bounded, in the usual $C^\infty$ topology,  $p+k$-forms on $\what{M}$.

The holonomy pseudogroup $\what{\cH}$ acts on $\cA^k_c(\what{T})$ just as $\cH$ does on  $\cA^k_c(T)$.  
Denote by  $\cA^k_c(\what{M}/\what{F})$ the quotient of $\cA^k_c(\what{T})$ by the closure of the vector subspace $\what{V}$ generated by elements of the form $\alpha-h^*\alpha$ where $h\in \what{\cH}$ and $\alpha\in\cA^k_c(\what{T})$ has support contained in the range of $h$.  We need to take care as to what ``the vector subspace $\what{V}$ generated by elements of the form $\alpha-h^*\alpha$" means.  This is especially important in the proof of Lemma 3.12 of \cite{H95}.   Members of $\what{V}$ consist of possibly infinite sums of elements of the form 
$\alpha-h^*\alpha$, with the following restriction: for each member of $\what{V}$,  there is $n \in \N$ so that the number of elements of that member having the domain of $h$ contained in any $\what{T}_{i,j}$ is less than $n$.

The exterior derivative $d_{\what{T}}$ induces a continuous differential $d_{\what{H}}:\cA^k_c(\what{M}/\what{F})\to \cA^{k+1}_c(\what{M}/\what{F})$.  Note that $\cA^k_c(\what{M}/\what{F})$ and $d_{\what{H}}$ are independent of the choice of cover $\cU$.  The associated cohomology theory is denoted $H^*_c(\what{M}/\what{F})$ and is called the Haefliger cohomology of $\what{F}$. 

Let $f:\what{M} \to \S^n$ be an $\ep$ contracting map which is constant near infinity and has non-zero degree.
Because $f$ is constant near infinity, the pull back of any bundle on $\S^n$ under $f$ is a trivial bundle off some compact subset $K$ of $\what{M}$.  Using the Chern-Weil construction of characteristic classes, we have that the Chern character of such a bundle has compact support contained in $K$ and it is immediate that Lemma \ref{lemma1} extends to $\what{M}$.  Let $\what{E} \to \what{M}$ be a bundle satisfying Lemma \ref{lemma1}, which is the pull back of a bundle over $\S^n$.  Denote by $\I^{n/2}$  the trivial Hermitian bundle over $\what{M}$ of $\C$ dimension $n/2$.   We may assume that the connection on $\what{E}$ used to construct its characteristic classes is compatible with that on $\I^{n/2}$ off $K$.  Thus the K-theory class $[\what{E}] - [\I^{n/2}]$ is supported on $K$, and its Chern character is $\ch([\what{E}]) - n/2 = \ch_{n/2}(\what{E})$.  

The arguments in \cite{H95,HL99} are arguments on the homotopy groupoid of $\what{F}$.  As noted above, the fact that  $(\what{M}, \what{F}) $ is of bounded geometry implies that those results are equally valid here.  In particular we have that,  in $H^*_c(\what{M}/\what{F})$,
$$
\int_{\what{F}} \what{A}(T\what{F})\ch(\what{E}) \,\, = \,\, \ch(\Dir_{\what{E}}^+) 
\quad \text{and} \quad
\int_{\what{F}} \what{A}(T\what{F})\ch(\I^{n/2}) \,\, = \,\, \ch(\Dir_{\I^{n/2}}^+).
$$ 

The proof of Lemma \ref{NSinvars} works equally well here.  In that lemma we are working on the leaves of $F_s$, which are the same as the leaves of $\what{F}_s$ since we are working on the homotopy groupoids.  In particular they are the universal covers of the leaves of $F$.  Thus  there are spectral gaps about $0$ for   $\Dir_{\what{E}}^2$ and $\Dir_{\I^{n/2}}^2$.  Using this fact and applying the results of \cite{H95,HL99}, extended to foliations and manifolds of bounded geometry, we have $\lim_{t \to \infty} \tr_s(K_t(\what{x},\what{x})) = 0$  for both  $\Dir_{\what{E}}^2$ and $\Dir_{\I^{n/2}}^2$.  In addition $\tr_s(K_t(\what{x},\what{x}))$ satisfies the Dominated Convergence Theorem for $\dd \int_{\what{F}}$ as $t \to \infty$.  Thus 
$$
\int_{\what{F}} \what{A}(T\what{F})\ch(\what{E}) \,\, = \,\, \ch(\Dir_{\what{E}}^+) \,\, = \,\,  \lim_{t \to \infty}  \int_{\what{F}}\tr_s(K_t(\what{x},\what{x}))  \,\, = \,\, \int_{\what{F}} \lim_{t \to \infty} \tr_s(K_t(\what{x},\what{x}))
\,\, = \,\,  0,
$$ 
in $H^*_c(\what{M}/\what{F})$, just as in Section \ref{results}, and similarly for $\Dir_{\I^{n/2}}^2$.   
So 
$$
\int_{\what{F}} \what{A}(T\what{F})\ch(\what{E})   \,\, = \,\,  \int_{\what{F}} \what{A}(T\what{F})\ch(\I^{n/2}) \,\, = \,\,  0, 
$$
in $H^*_c(\what{M}/\what{F})$.   
Now
$$
\int_{\what{F}} \what{A}(T\what{F})\ch_{n/2}(\what{E}) \,\, = \,\, \int_{\what{F}} \what{A}(T\what{F})\ch(\what{E})  -   \int_{\what{F}} \what{A}(T\what{F})\ch(\I^{n/2}) \,\, = \,\, 0.
$$
As the dimension of $\what{M} = n$,  $\what{A}(T\what{F})\ch_{n/2}(\what{E}) = \ch_{n/2}(\what{E})$,  so we have
$\dd \int_{\what{F}} \ch_{n/2}(\what{E}) \,\, = \,\,  0$ in $H^*_c(\what{M}/\what{F})$.   

The Haefliger class $\dd \int_{\what{F}} \ch_{n/2}(\what{E})$ is compactly supported in $H^*_c(\what{M}/\what{F})$, since 
$\ch_{n/2}(\what{E})$ is compactly supported in $H^n(\what{M}, \R)$.  
Just as in the case of a compact covering of $M$, we may pair a compactly supported Haefliger class with the closed Haefliger current $C$  which is integration over the complete transversal $\what{T}$ to obtain
$$
0 \,\, = \,\  \langle C, \int_{\what{F}}  \ch_{n/2}(\what{E}) \rangle  \,\, = \,\  \int_{\what{M}}  \ch_{n/2}(\what{E})  \,\, \neq \,\  0,
$$
a contradiction.

\section{Final notes}\label{Finalnotes}

\noindent {\bf K-area and positive scalar curvature. }  Theorem \ref{main} may be generalized as follows.  Recall the operator $\mathcal{R}^{\what{E}_r}_{\what{F}_s}$ from Section \ref{cecase}.   The following definition is an obvious generalization of K-area due to Gromov, \cite{Gromov}.
\begin{definition}   Suppose $M$ is a compact manifold.  The covering spin foliation K-area $K_{cs}(M,F)$ is the supremum of $(||\mathcal{R}^{\what{E}_r}_{\what{F}_s}||)^{-1}$ over all spin foliations $F$ of $M$ and all bundles $\what{E}$ with compact support on oriented covers $\what{M}$ of $M$  which satisfy $c(\what{E}) = c_0(\what{E}) + c_{n/2}(\what{E})$  and  $\dd\int_{\what{M}}c_{n/2}(E) \neq 0$.
\end{definition}
Note that any enlargeable manifold has $K_{cs}(M,F) = \infty$.  The following is immediate.
\begin{theorem} \label{Karea}  A compact manifold $M$ does not admit any spin foliation F with Hausdorff homotopy groupoid and with scalar curvature $\kappa(F) > 4/K_{cs}(M,F)$ everywhere on $M$. 
\end{theorem}

\noindent {\bf Connes' vanishing results.}  
Connes has vanishing results for characteristic numbers of foliations with PSC.  One of which is 
\begin{theorem}{(Corollary 8.3 \cite{C86})}\label{CTh2}
Suppose that $F$ is a spin foliation of a compact oriented manifold $M$, and that $F$ admits a  PSC metric. Let $\cR$ be the subring of $H^*(M;\R)$ generated by the Chern characters of holonomy equivariant bundles.  Then $\langle  \what{A}(TF)\omega, [M] \rangle = 0$ for all $\omega \in \cR$.
\end{theorem}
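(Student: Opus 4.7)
The plan is to give an index-theoretic proof of Theorem \ref{CTh2} (under the standing assumption that $\cG$ is Hausdorff, which is needed to invoke the machinery of \cite{H95,HL99}) by running the argument of Section \ref{cecase} with the almost-flat twist replaced by a holonomy equivariant one. The key observation is that any holonomy equivariant Hermitian bundle $E$ over $M$ carries a canonical \emph{leafwise flat} partial connection: the holonomy-equivariant structure prescribes parallel transport along every leafwise path, and this transport is flat because on a leaf only trivial compositions of holonomy elements appear. Choose any Hermitian connection $\nabla^E$ on $E$ whose restriction to $TF$ is this leafwise flat connection. Then the leafwise curvature endomorphism $\mathcal{R}^{E_r}_{F_s}$ of Section \ref{cecase}, built only from curvature in directions tangent to $F$, vanishes identically on $\cG$.

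With this choice, the leafwise Lichnerowicz formula on $\cG$ becomes
$$
\Dir_{E_r}^2 \,\, = \,\, \nabla^{*}\nabla \,\, + \,\, \frac{1}{4}\kappa.
$$
Since $M$ is compact and $\kappa > 0$, there exists $c > 0$ with $\kappa \geq 4c$ on all of $\cG$, and the Rayleigh quotient computation of Lemma \ref{NSinvars} (now strictly simpler, as no smallness hypothesis on $\mathcal{R}^{E_r}_{F_s}$ is required) produces a uniform spectral gap about $0$ for $\Dir_{E_r}^2$. Applying the results of \cite{H95,HL99} recalled in Section \ref{results} gives
$$
\int_{F} \what{A}(TF)\ch(E) \,\, = \,\, \ch(\Dir_{E_r}^{+}) \,\, = \,\, \ch(P_0^{E_r}) \,\, = \,\, 0 \quad \text{in } H^*_c(M/F).
$$
Pairing with the closed Haefliger current $C$ given by integration over a complete transversal yields, exactly as at the end of Section \ref{cecase},
$$
\langle \what{A}(TF)\ch(E),\,[M] \rangle \,\, = \,\, \left\langle C,\, \int_{F} \what{A}(TF)\ch(E) \right\rangle \,\, = \,\, 0.
$$

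To pass from a single Chern character to a general $\omega \in \cR$, I would use the multiplicative and additive behavior of $\ch$: the class of holonomy equivariant bundles is closed under direct sums, tensor products, and duals, and $\ch$ is a ring homomorphism, so any polynomial in Chern characters of holonomy equivariant bundles coincides with $\ch$ of a virtual holonomy equivariant bundle $[E_1]-[E_2]$. The previous paragraph applied to $E_1$ and to $E_2$ separately, combined with linearity of $\int_F$, gives the conclusion. The main obstacle I anticipate is the verification that the canonical leafwise flat connection extends to a Hermitian connection on $E$ satisfying the uniform boundedness and smoothness hypotheses underlying the Bismut superconnection and heat-kernel arguments of \cite{H95,HL99}; once this technical input is in place, everything else is a direct transcription of the positive scalar curvature step of Section \ref{cecase}.
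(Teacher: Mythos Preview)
Your proposal is essentially correct and matches the approach the paper sketches in Section~\ref{Finalnotes}. The paper does not spell out a proof of Theorem~\ref{CTh2} (it is quoted from \cite{C86}), but it indicates precisely this route to the closely related Theorem stated immediately after: the key property of a holonomy equivariant bundle is that its pull-back to $F_s$ under $r$ is leafwise almost flat, after which the Lichnerowicz estimate of Lemma~\ref{NSinvars} and the results of \cite{H95,HL99} give the Haefliger vanishing $\int_F \what{A}(TF)\ch(E)=0$; pairing with the transversal current then yields the numerical vanishing. Your observation that the leafwise partial connection is actually \emph{flat} (so $\mathcal{R}^{E_r}_{F_s}\equiv 0$, not merely small) is a sharpening of the paper's ``almost flat'' remark and makes the spectral-gap step cleaner, but the overall architecture is the same. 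Your handling of general $\omega\in\cR$ via closure of holonomy equivariant bundles under $\oplus$, $\otimes$, duals and the ring homomorphism property of $\ch$ is also the standard reduction. The one caveat you already flag---that this argument needs $\cG$ Hausdorff, whereas Connes' original statement does not---is exactly the trade-off the paper makes as well.
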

Note that any bundle associated to the normal bundle $\nu$ of $F$ is holonomy equivariant.   The important property for our approach is that the pull back of the bundle  under $r:\cG \to M$ be leafwise almost flat along $F_s$.  Holonomy equivariant bundles  have this property.  Using the techniques of this paper, it is straightforward to extend this result to the vanishing of Haefliger cohomology classes as follows.
\begin{theorem}\label{Connes1}
Suppose that $F$ is a spin foliation of a compact oriented manifold $M$ with Hausdorff homotopy groupoid, and that $F$ admits a PSC metric. Let $\cT$ be the subring of $H^*(M;\R)$ generated by the Chern characters of bundles whose pull back  under $r:\cG \to M$ is leafwise almost flat along $F_s$.  Then $\dd \int_F  \what{A}(TF)\omega = 0$ in $H^*_c(M/F)$ for all $\omega \in \cT$.
\end{theorem}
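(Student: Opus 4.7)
The plan is to adapt the compactly-enlargeable case of the proof of Theorem \ref{main} (Section \ref{cecase}), replacing the enlargeability-based construction of Lemma \ref{lemma1} with the direct hypothesis that the bundles of interest have leafwise almost flat pullbacks to $F_s$. First, observe that $\ch(E_1)\cdots\ch(E_k) = \ch(E_1 \otimes \cdots \otimes E_k)$ in $H^*(M;\R)$, and that a tensor product connection on $E_1 \otimes \cdots \otimes E_k$ has curvature bounded in operator norm by $\sum_i \|R^{E_i}\|$; hence the tensor product of finitely many bundles with leafwise almost flat pullback to $F_s$ is again leafwise almost flat. Consequently every $\omega \in \cT$ may be written as a finite $\R$-linear combination $\omega = \sum_j c_j \ch(E_j)$ with each $r^*E_j$ leafwise almost flat on $F_s$, and by linearity of the map $\int_F \what{A}(TF)(\cdot)$ it suffices to establish
$$
\int_F \what{A}(TF)\ch(E) \,=\, 0 \quad \text{in } H^*_c(M/F)
$$
for a single bundle $E \to M$ with $r^*E$ leafwise almost flat on $F_s$.

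For this single-bundle case I would mimic the proof of Lemma \ref{NSinvars}. The leafwise scalar curvature of $F_s$ on $\cG$ is the pullback of that of $F$ via $r$, which by compactness of $M$ and the positive scalar curvature hypothesis has a uniform positive lower bound $\kappa \geq 4\kappa_0 > 0$. The almost flat hypothesis provides, for any prescribed $\ep > 0$, a Hermitian connection on $r^*E$ with $\|\cR^{E_r}_{F_s}\| < \ep$ uniformly on $\cG$. Choosing $\ep < \kappa_0$, the leafwise Lichnerowicz identity
$$
\Dir_{E_r}^2 \,=\, \nabla^*\nabla \,+\, \frac{1}{4}\kappa \,+\, \cR^{E_r}_{F_s}
$$
gives $\frac{1}{4}\kappa + \cR^{E_r}_{F_s} \geq c\,\I$ for some uniform $c > 0$, and the verbatim computation of Lemma \ref{NSinvars} then produces a spectral gap about $0$ for $\Dir_{E_r}^2$. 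In particular $P_0^{E_r} = 0$, and the index-theoretic results of \cite{H95,HL99} recalled in Section \ref{results} yield
$$
\int_F \what{A}(TF)\ch(E) \,=\, \ch(\Dir_{E_r}^+) \,=\, \ch(P_0^{E_r}) \,=\, 0
$$
in $H^*_c(M/F)$, completing the reduction.

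The main subtlety, which deserves explicit verification, is that the connection on $r^*E$ witnessing almost flatness need not be the pullback of a connection on $E \to M$. However, both sides of the index identity $\int_F \what{A}(TF)\ch(E) = \ch(\Dir_{E_r}^+)$ of \cite{H95} are independent of the choice of connection: the left-hand side by Chern-Weil theory applied to $E$ on $M$, and the right-hand side because any two Hermitian connections on $r^*E$ are joined by an affine path and the Haefliger Chern character of the leafwise index class depends only on the underlying bundle. Granting this stability under leafwise change of connection on $r^*E$, which is the main technical point to check carefully, the Lichnerowicz argument carried out with the almost flat connection provides the desired cohomological vanishing of $\int_F \what{A}(TF)\ch(E)$, and the theorem follows.
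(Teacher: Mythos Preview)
Your proposal is correct and matches the paper's intended argument: the paper does not give a detailed proof, merely asserting that ``using the techniques of this paper, it is then fairly straightforward'' to obtain the result, and your write-up supplies precisely those details (reduction to a single bundle via tensor products, the Lichnerowicz estimate to force a spectral gap, and the identity $\int_F\what{A}(TF)\ch(E)=\ch(\Dir_{E_r}^+)=\ch(P_0^{E_r})=0$ from \cite{H95,HL99}).

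The one point worth flagging is the subtlety you already isolate: the leafwise almost flat connection on $r^*E$ is in general \emph{not} the pullback of a connection on $E$ (this is exactly the situation for holonomy equivariant bundles, the paper's motivating example), whereas the results of \cite{H95,HL99} as summarized in Section~\ref{results} are stated for pullback connections. Your resolution---that $\ch(\Dir_{E_r}^+)$ is unchanged along an affine path of Hermitian connections on $E_r$---is the right idea, but it does require revisiting the constructions of \cite{H95,HL99} to confirm they apply to leafwise operators on $\cG$ built from a non-pullback (but still uniformly bounded geometry) connection on $E_r$, or alternatively that the resulting Haefliger index class is connection-independent. You correctly identify this as the main technical point; the paper does not address it explicitly either.
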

\begin{corollary}\label{Connes2}
Let $F$ and $\cT$ be as above.  Then for any Haefliger current $\beta$, $\dd \langle \beta,  \int_F  \what{A}(TF)\omega\rangle = 0$.
\end{corollary}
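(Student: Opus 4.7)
The plan is to derive the Corollary as an immediate consequence of the preceding Theorem, which establishes that $\int_F \what{A}(TF)\omega = 0$ in the reduced Haefliger cohomology $H^*_c(M/F)$ for every $\omega \in \cT$. Since the Theorem has already done the substantive work of identifying this class with $\ch(P_0^{\what{E}_r})$ via a spectral-gap argument (as in the proof of Theorem \ref{main}) and shown that it vanishes, what remains is a purely formal extraction of the pairing with a Haefliger current.

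A Haefliger current $\beta$ is, by construction, a continuous linear functional on the topological vector space $\cA^*_c(M/F)$ of compactly supported Haefliger forms reviewed in Section \ref{results}. For the pairing $\langle \cdot, \beta \rangle$ to descend from forms to the reduced Haefliger cohomology $H^*_c(M/F)$, $\beta$ must annihilate the image of $d_H$, and, by continuity, its closure as well. This is the implicit convention used throughout the paper: compare the use of the "closed Haefliger current $C$" in the proof of Theorem \ref{main}, where the pairing of a Haefliger cohomology class with a current is invoked at the level of $H^*_c(\what{M}/\what{F})$. Under this convention $\langle \cdot, \beta \rangle$ factors through $H^*_c(M/F)$.

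With this setup in place, the Corollary follows tautologically: the Theorem gives $[\int_F \what{A}(TF)\omega] = 0$ in $H^*_c(M/F)$, and the induced functional $\langle \cdot, \beta \rangle \colon H^*_c(M/F) \to \R$ sends the zero class to zero. The only point requiring attention is continuity of $\beta$ with respect to the $C^\infty$-topology used to define the reduced Haefliger complex; this is immediate from the definition of Haefliger currents as elements of the topological dual, so no substantive obstacle remains. Thus the Corollary is obtained by applying to the Theorem the general principle that a linear functional vanishes on the zero class.
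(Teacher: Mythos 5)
Your proposal is correct and matches the paper's (implicit) argument: the corollary is stated as an immediate consequence of the preceding theorem, obtained by pairing the vanishing Haefliger cohomology class $\int_F \what{A}(TF)\omega$ with a closed Haefliger current, exactly as you describe. Your remark that $\beta$ must annihilate $\overline{\Im\, d_H}$ for the pairing to descend to $H^*_c(M/F)$ is a reasonable reading of the paper's convention (cf.\ the ``closed Haefliger current $C$'' in the proof of Theorem \ref{main}).
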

Taking $\beta$ to be the Haefliger current given by integration over a complete transversal gives Theorem \ref{CTh2}  for foliations with Hausdorff homotopy groupoid, with $\cR$ replaced by $\cT$.   In particular, taking $\omega = \what{A}(\nu)$ gives  Theorem \ref{CTh} 
for such foliations.

\medskip
\noindent {\bf Details of Example \ref{exennspin}.}   Recall that $M=  \T^k \times ( \T^6 \sharp (\S^2\times \C P^2))$.  Since $\T^k$ and $\T^6$ are enlargeable, so is $M$.    
Since $\what{A}$ is multiplicative and $\what{A}(\T^k) = 0$, $\what{A}(M) = 0$.  The manifold  $\T^6 \sharp (\S^2\times \C P^2)$  is well known to be non-spin  (also its universal cover is non spin).   Now the Stiefel-Whitney classes of these manifolds satisfy 
$$
w_1(\T^k) \,\, = \,\, w_2(\T^k) \,\, = \,\, w_1(T^6 \sharp (\S^2\times \C P^2)) \,\, = \,\, 0  \quad \text{  and }  \quad 
w_2(T^6 \sharp (\S^2\times \C P^2)) \,\, \neq \,\, 0. 
$$
Denote the obvious projections by $\pi_1$ and $\pi_2$.   Then it is clear that $w_1(M) = 0$, and 
$$
w_2(M)  \,\, = \,\, \pi_1^*(w_2(\T^k)) \,\, + \,\, 
 \pi_1^*(w_1(\T^k))   \pi_2^*(w_1(T^6 \sharp (\S^2\times \C P^2)))   \,\, + \,\,  \pi_2^*(w_2(T^6 \sharp (\S^2\times \C P^2))) 
\,\, = \,\,
$$
$$
 \pi_2^*(w_2(T^6 \sharp (\S^2\times \C P^2))) \,\, \neq \,\, 0. 
$$

Next, we need spin foliations of $M$ with Hausdorff homotopy groupoids.  In particular,  there are the ``constant slope" foliations given by Lie sub-algebras of the Lie algebra of $\T^k$.    In increasing generality, we can take  the foliations defined by:  nowhere zero vector fields $X_1,...,X_{\ell}$, with $X_i$ on $\T^{k_i}_i$, with $T^{k_1}_1 \times \cdots \times T^{k_{\ell}}_{\ell} \subset \T^k$;  
nowhere zero, linearly independent, commuting vector fields $X_1,...,X_{\ell}$  on $\T^k$;    if  $Y_1,...,Y_{\ell}$ are arbitrary commuting vector fields on $T^6 \sharp (\S^2\times \C P^2)$ (one can even take $Y_1 = \cdots =Y_{\ell}$), the $X_i$ as immediately above, and  $a_1,...,a_{\ell} \in \R$, then the foliation generated by the vector fields $X_1 +a_1Y_1, ..., X_{\ell} + a_{\ell}Y_{\ell}$ satisfy our conditions.  

For more sophisticated examples we have the following.
\begin{example}  The following examples, which have non-zero secondary characteristic classes which vary linearly independently,  may be substituted for $\T^k$ in Example \ref{exennspin}.  See \cite{H78}, Section 5,  for more details. 

Let $G =  SL_2\R  \times \cdots \times SL_2\R$ (k copies), 
$K =  SO_2  \times \cdots \times SO_2$ (k copies), and $\Gamma = \Gamma_1 \times \cdots \times \Gamma_k$, where $\Gamma_i \subset SL_2\R$ is a discrete subgroup with $\Gamma_i\backslash SL_2\R /SO_2$ a surface of higher genus.  

Let $F_0$ be the foliation of $G \times (\R^{2k}- \{0\})$ whose leaves are given by 
$$
L_v \,\, = \,\, \{(g, g^{-1}v) \, | \, g \in G\},   \quad v \in \R^{2k}- \{0\}.
$$
Denote the coordinates on $\R^{2k}$ by $(x_1,y_1,...,x_k,y_k)$.   Choose non-zero  $\lambda_1, ..., \lambda_k \in \R$, and set 
$$
X_{\lambda} \,\, =  \,\, \sum_{i=1}^k  \lambda_i(x_i \pa/\pa x_i + y_i \pa/\pa y_i).
$$  
Then $X_{\lambda}$ is transverse to $F_0$ and its flow takes leaves to leaves, so $F_0$ and $X_{\lambda}$ define a foliation $F$ on $G \times (\R^{2k}- \{0\})$.  

Let $G$ and $K$ act on $G \times (\R^{2k}- \{0\})$  by 
$$
g_1 \cdot (g,v) \,\, = \,\,   (g_1g,v)   \quad \quad \text{and}  \quad \quad (g,v) \cdot k \,\, = \,\,   (gk,k^{-1}v).
$$
The action of $K$ preserves each leaf of $F$ and the action of $G$ takes leaves of $F$ to leaves of $F$.

Set 
$$
M \,\, = \,\,  \Gamma \backslash  G \times_K (\R^{2k} - \{0\})/\Z,
$$
where $n \in \Z$ acts by the time $n$ flow $\varphi_n$ of the radial vector field $X = \sum_{i=1}^k x_i \pa/\pa x_i + y_i \pa/\pa y_i $ on $\R^{2k}$.  The foliation $F$ induces a foliation on $ \Gamma \backslash  G \times_K (\R^{2k} - \{0\})$ which is preserved by $X$, so also by the action of $\Z$.  Denote the induced foliation on $M$ by $F_{\lambda}$.

Note the following. 
\begin{itemize}
\item  $M \simeq  \Gamma \backslash  G \times_K (\S^1 \times \S^{2k-1})$.   A fundamental domain for $M$ in 
$\Gamma \backslash  G \times_K (\R^{2k} - \{0\})$ is given by 
$$
\{ \varphi_t(\Gamma \backslash  G \times_K  \S^{2k-1})  \, | \, t \in [0,1)\}.
$$  
The diffeomorphism $M \to  \Gamma \backslash  G \times_K (\S^1 \times \S^{2k-1})$ sends $\varphi_t(\Gamma \backslash  G \times_K  \S^{2k-1})$ to $\Gamma \backslash  G \times_K (\{t\} \times  \S^{2k-1})$, i.e.
$M \simeq  (\Gamma \backslash  G \times_K \times \S^{2k-1}) \times \S^1.$
Since $\what{A}$ is multiplicative, $\what{A}(M) = 0$.

\item   The foliation induced on $M$ by $F_0$ has leaves which are covers of $\Gamma \backslash  G / K$, so has tangent bundle equivalent to the bundle $\pi^*(T(\Gamma \backslash  G / K))$ where $\pi:M \to \Gamma \backslash  G / K$ is the projection.  Thus the tangent bundle of $F_{\lambda}$ is equivalent to $T\S^1 \oplus \pi^*(T(\Gamma \backslash  G / K))$,  a trivial bundle, so spin.

\item The diffeomorphism of $G \times (\R^{2k} - \{0\})$ given by $(g,v) \to (g,gv)$ takes the foliation  $F_0$ to the natural flat foliation $F_f$  whose leaves are $G \times \{v\}$.   It induces a diffeomorphism  
$M \simeq G/K \times_{\Gamma} (\R^{2k} - \{0\})/\Z$
and the image of $F_{\lambda}$, also denoted $F_{\lambda}$, is induced by $F_f$ and $X_\lambda$.

\item   The foliation induced by $F_f$ and $X_{\lambda}$ on  $ G /K  \times (\R^{2k} - \{0\})$ obviously has Hausdorff homotopy groupoid, and it covers the foliation $F_{\lambda}$, so $F_{\lambda}$ also has Hausdorff homotopy groupoid.

\item  The universal cover of $G/K \times_{\Gamma} (\R^{2k} - \{0\})/\Z$ is $\H^{2k} \times \R^{2k}$, which obviously admits $\ep$ contracting maps onto $\S^{4k}(1)$ which are constant near infinity and have non-zero degree, so $M$ is enlargeable.  
\end{itemize}
\end{example}

The importance of these foliations is the following, see \cite{H78}.  Any  monomial $\Phi(\sigma_1,...,\sigma_{2k})$ in the elementary symmetric  functions, homogenous of degree $2k$, with at least one odd subscript, determines a secondary characteristic class $\Phi(F_{\lambda}) \in H^{4k-1}(M;\R)$, and  
$$
 \int_{\Gamma \backslash  G \times_K \S^{2k-1}}  \hspace{-1.3cm}\Phi(F_{\lambda}) \,\,\, \,\, = \,\, 
\pi^k \vol(\Gamma \backslash G/K) \frac{\Phi(\lambda_1,\lambda_1,  ..., \lambda_k,\lambda_k)}{(\lambda_1 \cdots \lambda_k)^2}.
$$ 

Note that there are other families of examples in [H78] which give rise to examples of this type.

\end{document}